 \def\newblock{\ }%
\newcommand{\N}{\mathbb{N}}
\newcommand{\E}{\mathbb{E}}
\newtheorem{lemma}{Lemma}
\newtheorem{theorem}{Theorem}
\newtheorem{proposition}{Proposition}
\newtheorem{definition}{Definition}
\newtheorem{remark}{Remark}
\numberwithin{equation}{section}
\def\IND{\mathbbm{1}}
\newcommand{\EXP}{\mathbb{E}}
\newcommand{\PROB}{\mathbb{P}}
\newcommand{\defeq}{\stackrel{\mathrm{def.}}{=}}
\begin{document}

\title{Archaeology of random recursive dags and 
    Cooper-Frieze random networks
    \thanks{This research was supported by a Huawei Technologies Co., Ltd. grant.
      Simon Briend acknowledges the support of Région Ile de France.
G\'abor Lugosi acknowledges the support of Ayudas Fundación BBVA a
Proyectos de Investigación Científica 2021 and
the Spanish Ministry of Economy and Competitiveness, Grant
PGC2018-101643-B-I00 and FEDER, EU}
\author{
  Simon Briend \\
  Université Paris-Saclay, CNRS, \\
  Laboratoire de Mathématiques d'Orsay, \\
  91405, Orsay, France 
  \and
  Francisco Calvillo  \\
  Department of Mathematics and Applications, \\
  École Normale Supérieure, 75005, Paris, France
\and  
G\'abor Lugosi \\
Department of Economics and Business, \\
Pompeu  Fabra University, Barcelona, Spain \\
ICREA, Pg. Lluís Companys 23, 08010 Barcelona, Spain \\
Barcelona Graduate School of Economics
}
}

\maketitle

\begin{abstract}
We study the problem of finding the root vertex in large growing
networks.
We prove that it is possible to construct confidence sets of size
independent of the number of vertices in the network that contain the
root vertex with high probability in various models of random networks.
The models include uniform random recursive dags and
uniform Cooper-Frieze random graphs. 
\end{abstract}

\section{Introduction}

With the ubiquitous presence of networks in many areas of science and
technology, a  multitude  of new challenges have gained importance in the statistical
analysis of networks. One such area, termed \emph{network
  archaeology} (\citet*{NaKi11})
studies problems about unveiling the past of dynamically growing
networks, based on present-day observations.

In order to develop a sound statistical theory for such problems, one
usually models the growing network by simple stochastic growth dynamics.
Perhaps the most prominent such growth model is the preferential
attachment model, advocated by \citet*{AlBa02}. In these models, vertices of
the network arrive one by one and a new vertex attaches to one or more
existing vertices by an edge according to some simple probabilistic
rule.

Arguably the simplest problem of network archaeology is that of
\emph{root finding}, when one aims at estimating the first vertex of a
random network, based on observing the (unlabeled) network at a much later point
of time. 

The existing literature on the theory of network archaeology
mostly focuses on the simplest possible kind of networks, namely
trees, 
see
\citet*{Hai70},
\citet*{ShZa11,ShZa16},
\citet*{BuMoRa15},
\citet*{CuDuKoMa15},
\citet*{KhLo16},
\citet*{JoLo16,JoLo17a},
\citet*{BuElMoRa17},
\citet*{BuDeLu17},
\citet*{LuPe19},
\citet*{ReDe19},
\citet*{BaBh20},
\citet*{CrXu21},
\citet*{AdDeLuVe21},
\citet*{BrDeGo21}.

In various models of growing random trees, it is
quite well understood up to what extent one may identify the origin of
the tree (i.e., the root) by observing a large unlabeled tree. These
models include uniform and linear preferential attachment trees and
diffusion over regular trees. Remarkably, in all these models,
the size of the tree does not play a role. In other words, there exist
root-finding algorithms that are able to select a small number of
nodes -- independently of the size of the tree -- such that the root
vertex is among them with high probability.

Here we address the more difficult -- and more
realistic -- problem of finding the origin of growing networks when
the network is not necessarily a tree. The added difficulty stems from the fact
that the centrality measures that proved to be successful in root
estimation in trees crucially rely on properties of trees.

A notable exception in the literature is the recent paper of \citet*{CrXu21a} in which
the authors allow for a ``noisy'' observation of the tree. In their
model, the union of the tree of interest and an (homogeneous)  Erd\H{o}s-R\'enyi
random graph is observed, and the goal is to estimate the root of
the tree.

In this paper we study root estimation in two more complex network models.
Both of these models may be viewed as natural extensions of the
random recursive trees that were in the focus  of most of the previous study of network
archaeology. 
Recall that a uniform random recursive tree on the vertex
  set $[n]$ is defined recursively, such that each vertex $i\in \{2,3,\ldots,n\}$
  is attached by an edge to a vertex chosen uniformly at random among
  the vertices $\{1,\ldots,i-1\}$, see, e.g., \citet*{Drm09}.

In particular, we study the problem of root finding in
(1) uniform random recursive dags; and
(2) uniform Cooper-Frieze random graphs.

\subsubsection*{Uniform random recursive dags}

For a positive integer $\ell$, a uniform random $\ell$-dag is simply
the union of $\ell$ independent uniform  random recursive trees on
the same vertex set $[n]$. Equivalently, a uniform random $\ell$-dag
may be generated recursively; each vertex $i\in \{2,3,\ldots,n\}$
  is attached by an edge to $\ell$ vertices chosen uniformly at
  random  (with replacement) among  the vertices $\{1,\ldots,i-1\}$.
Multiple edges are collapsed so that the resulting graph is simple. 
Random recursive dags have been studied by
\citet*{DiSeSpToTs94,TsXh96,TsMa01,DeJa11,BrFa12,Mah14}, among others.

\begin{definition}
Let $n,\ell\in \N$.
For $i=[\ell]$, let $G_i=(V,E_i)$ be independent uniform random recursive trees on the vertex set
$V=[n]$.
A \emph{uniform random recursive $\ell$-dag} on $n$ vertices
is $G=(V,E_1\cup \cdots \cup E_\ell)$.
\end{definition}

\subsubsection*{Uniform Cooper-Frieze random graphs}

The other network model studied here was introduced by 
\citet*{CoFr03} in an attempt to mathematically describe large web graphs, see also \citet*{KaFr16}. 
In the Cooper-Frieze network model both vertices and edges are added
sequentially to the network based on uniform or preferential
attachment mechanisms. 
The model is quite general but here we focus on the simplest version
when both vertices and edges are added by \emph{uniform} attachment.

More precisely, the uniform Cooper-Frieze growth model is defined
as follows. The procedure has a parameter $\alpha \in (0,1)$.
The process is initialized by a graph containing a single vertex and
no edges. At each time instance $t=1,2,\ldots$, an independent
Bernoulli$(\alpha)$ random variable $Z_t$ is drawn. If $Z_t=0$,
a new vertex is added to the vertex set along with an edge that
connects this vertex to one of the existing vertices, chosen uniformly
at random. If $Z_t=1$, then
a new edge is added by choosing two existing vertices uniformly at
random and connecting them. Note that the resulting graph may have
multiple edges. In such cases, we may convert the graph into a simple
graph by keeping only one of each multiplied edge.

If one runs the process for $T$ steps for a large
value of $T$, the graph has
$n \sim \text{Binomial}(T-1,1-\alpha) \approx (1-\alpha)T$ vertices and $T-1 \approx n/(1-\alpha)$
edges. If one removes the edges added at the times when $Z_t=1$,
the remaining graph is a tree, distributed as a uniform random recursive
tree on $n$ vertices.
The remaining $T-n-1$ edges are present
approximately independently of each other and there is an edge between
vertices $i$ and $j$ (where $1\le i< j \le n$) if
\[
    \sum_{t =1}^T   \sum_{\ell=j}^n \IND_{t\in \{t_{\ell}+1,t_{\ell+1}-1\}}
    \IND_{\text{the pair $(i,j)$ is selected at time $t$}} \ge 1~,
\]
where $1=t_1< t_2 < \cdots < t_n\le T$ are the times when new vertices
are added, that is, when $Z_t=1$. Since the probability that edge 
$(i,j)$ is selected at time $t\in \{t_{\ell}+1,t_{\ell+1}-1\}$ is
$1/\binom{\ell}{2}$, for large values of $T$, the probability that
edge $(i,j)$ is present in the graph after $T$ steps is concentrated
around
\[
  \frac{c_{\alpha}}{\max(i,j)-1} \quad \text{where} \quad  c_\alpha \defeq \frac{2}{1-\alpha}~,
\]
whenever $\max(i,j)-1\ge c_{\alpha}$.
Hence, the uniform Cooper-Frieze model is essentially equivalent to
the following random graph model. In order to avoid some tedious and uninteresting 
technicalities, we work with this modified model instead of the
original recursive definition.

\begin{definition}
Let $n\in \N$ and let $c$ be a positive constant.
Let $G_1=(V,E_1)$ be a uniform random recursive tree on the vertex set
$V=[n]$.
Let $G_2=(V,E_2)$ be a random graph on the same vertex set,
independent of $G_1$, such that edges of $G_2$ are present
independently of each other, such that for all $i\neq j$,
\[
  \PROB\{(i,j) \in E_2\} = \min\left(\frac{c}{\max(i,j)-1},1 \right)~.
\]
Finally, the \emph{uniform Cooper-Frieze random graph} with parameters
$c$ and $n$ is $G=(V,E_1\cup E_2)$.
\end{definition}

\subsubsection*{Root estimation}

The main result of this paper is that finding the root is possible
both in uniform random recursive dags and
in
uniform Cooper-Frieze random graphs. More precisely, one may find
\emph{confidence sets} for the root vertex whose size does not depend
on the number of vertices in the graph.
To make such statements rigorous, consider the following definition.

\begin{definition}
  Let $\{G^{(n)}\}$ be a sequence of random graphs such that
  $G^{(n)}$ has vertex set $[n]$. We say that
  \emph{root estimation} is possible if the following holds.
  For every $\epsilon >0$, there exists a positive integer
  $K(\epsilon)$
  such that, for every $n\in \N$, upon observing the graph $G^{(n)}$
  without the vertex labels, one may find a set $S \subset [n]$ of vertices
  of size $|S|=K(\epsilon)$ such that
  \[
    \PROB\{1\in S\} \ge 1- \epsilon~.
  \]
\end{definition}

The set $S$ in the above definiton is often called a confidence set
for the root vertex. 

As mentioned above, root estimation has mostly been studied for
random recursive trees. \citet*{BuDeLu17} show that root estimation
is possible in the uniform random recursive tree and linear
preferential attachment trees. They show that in the case of the
uniform random recursive tree, one may take
$K(\epsilon) \le
\exp\left(c\log(1/\epsilon)/\log\log(1/\epsilon)\right)$
for some constant $c$.
For linear preferential attachment trees one may take
$K(\epsilon)=c\epsilon^{-2-o(1)}$, as shown by \citet*{BaBh20}
who also show that root estimation is possible for a wide class of
preferential attachment trees. Building on the papers of
\citet*{ShZa11,ShZa16}, \citet*{KhLo16} show that root estimation
is possible in random trees obtained by diffusion on an infinite regular 
tree, and that one my take
$K(\epsilon)  =\exp\left(O\left(\log(1/\epsilon)/\log\log(1/\epsilon)\right)\right)$.
\citet*{BrDeGo21} study root estimation in size-conditioned Galton–Watson
trees.

The sets $S$ of constant size that establish the possibility of root
estimation for various trees usually contain the set of most
``central'' vertices according to some notion of centrality such as
\emph{Jordan centrality} (as in \cite{BuDeLu17}, \cite{BaBh20}) or
\emph{rumor centrality} introduced in \cite{ShZa11,ShZa16}, see also
\cite{BuDeLu17}, \cite{KhLo16}. However, these notions are suited for
trees only and when the observed network is more complex, new ideas
need to be introduced. \citet*{CrXu21a} study a model in which the
observed network consists of either a uniform attachment tree (i.e.,
uniform random recursive tree) or a preferential attachment tree, with
random edges added (independently over all possible vertex pairs, with
the same probability). They introduce a Bayesian method and prove that
it is able to estimate the root as long as there are not too many
edges, where the threshold value depends on the particular model.
It is unclear if the method of \cite{CrXu21a} may be generalized to
the random graph models studied here. Instead, we
introduce an alternative root estimation method that is based
on the appearance of certain subgraphs.

The main results of this paper are summarized in the following two theorems.

\begin{tcolorbox}
\begin{theorem}
\label{thm:main1}
Let $G=G^{(n)}$ be a uniform random $\ell$-dag on $n$
vertices. Root estimation is possible in $G$. In
particular, there exist numerical constants $c_0,c_1,c_2>0$
such that, whenever $\epsilon \le e^{-c_2\ell}$,
one may take
\[
  K(\epsilon) \le \frac{c_0}{\epsilon} \log(1/\epsilon)^{\frac{c_1}{\ell}\log(1/\epsilon)}~.
\]
\end{theorem}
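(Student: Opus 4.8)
The plan is to estimate the root by looking for a vertex that is the common ancestor of many ``witness'' subgraphs that are very unlikely to attach high up in the dag. Concretely, I would fix a small threshold $k$ (depending on $\epsilon$ and $\ell$) and, for each vertex $v$, count the number of vertices $u$ such that $u$ lies in a connected subgraph of bounded size all of whose vertices have degree profile consistent with being early, and $v$ is the unique ``entry point'' of that subgraph. The key structural fact I want to exploit is that in a uniform random $\ell$-dag the out-neighbourhood of vertex $i$ consists of $\ell$ vertices chosen uniformly (with replacement) from $\{1,\dots,i-1\}$, so the probability that all $\ell$ parents of $i$ land in a fixed set $A\subseteq[i-1]$ is $(|A|/(i-1))^{\ell}$; raising to the power $\ell$ is precisely what makes the tail summable and forces the relevant subgraphs to cluster near the root. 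I would define the confidence set $S$ to be the $K(\epsilon)$ vertices maximizing this count.

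The first step is to set up the ancestry structure: show that for a uniform random $\ell$-dag, the vertices $\{1,2,\dots,k\}$ induce (after revealing their labels) a configuration that, from the point of view of an unlabeled observer, is recognizable as ``the top of the dag'' with probability at least $1-\epsilon/3$. This uses that for $\ell\ge 2$ the induced subgraph on the first $k$ vertices is dense enough (each new vertex among the first $k$ sends $\ell$ edges back, so with constant probability the first $k$ vertices form a graph with a distinctive signature) while the analogous configuration appearing later is exponentially unlikely in $\ell$. The second step is the anti-concentration / domination estimate: bound the probability that a ``late'' vertex $j > m$ (for $m$ polynomial in $1/\epsilon$) is an ancestor-bottleneck for a large subgraph, by a union bound over the $\binom{j}{\le k}$ candidate subgraphs times $(\text{size}/j)^{\ell \cdot(\text{number of backward edges})}$, and sum over $j$. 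The third step combines these: on the good event, the true root maximizes the count, and the number of vertices whose count can rival the root's is controlled by $m$, giving $K(\epsilon) = O(m) = \tilde O(1/\epsilon)$ with the stated $\log(1/\epsilon)^{(c_1/\ell)\log(1/\epsilon)}$ correction coming from optimizing the subgraph size $k \asymp \log(1/\epsilon)/(\ell\log\log(1/\epsilon))$ against the two error terms.

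I expect the main obstacle to be the second step: controlling, uniformly over all $n$, the probability that some vertex far from the root accidentally looks like a good root candidate. The difficulty is that the dag has $\Theta(n\ell)$ edges, so there are many potential ``witness'' subgraphs, and a naive union bound over subgraphs of size up to $k$ anywhere in $[n]$ does not obviously converge — one has to use the recursive structure to argue that a witness subgraph forces $\Omega(k)$ independent backward-edge events each contributing a factor like $1/j$, and then the sum $\sum_j \binom{j}{k}(k/j)^{\ell k}$ is dominated by its first terms only when $\ell k$ comfortably exceeds $k$, i.e. for $\ell \ge 2$, and the quantitative rate is exactly what pins down the admissible range $\epsilon \le e^{-c_2\ell}$. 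A secondary technical point is handling the collapsing of multiple edges: since edges are collapsed, the observed degrees are only lower bounds on the number of attachment events, so the recognition statistic must be monotone and robust to this, which I would arrange by working with the multigraph internally and only passing to the simple graph at the end.
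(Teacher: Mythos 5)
You have the right high-level skeleton---use a local subgraph statistic, show the root satisfies it, and kill off high-index vertices with a union bound whose terms decay in the vertex index---but the proposal is missing the one concrete idea that makes the argument go through, and one of its heuristic estimates points in the wrong direction.

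The paper's statistic is not a count that you maximize; it is a simple threshold test: declare $v$ a candidate if and only if $v$ is an anchor of a \emph{double cycle} of size $(s,t)$ with $s,t\le m$ (two cycles through $v$ sharing only a common path ending at $v$). The reason this particular subgraph is chosen, and not ``a connected subgraph of bounded size with early-looking degree profile,'' is a counting identity you need to make the union bound converge: a double cycle on $k$ vertices has $k+2$ edges, i.e., an \emph{excess of exactly two edges over vertices}. After a chain of estimates (Lemmas \ref{lemma:smallexponents}--\ref{lemma:bigexponents} in the paper) this excess is what produces the factor $v^{-2}$ in $\E C_{s,t}(v)$, so $\sum_{v>K}\E C_{s,t}(v)\lesssim 1/K$. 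A single cycle has excess one and gives a divergent $\sum 1/v$; a generic ``bounded-size connected subgraph'' gives no control at all. Your proposal does not identify which subgraphs to use, and ``degree profile consistent with being early'' is not a definition that yields this excess-edge structure.

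Your intended convergence mechanism is also not the one that operates here. You write that the probability of all $\ell$ parents of $i$ landing in a fixed set $A$ is $(|A|/(i-1))^\ell$, and that ``raising to the power $\ell$ is precisely what makes the tail summable.'' But the observed graph has multiple edges collapsed, and the event you can actually see---that a given edge $(i,j)$ with $i<j$ is present---has probability only $\approx \ell/(j-1)$, which is \emph{linear} in $\ell$, not an $\ell$-th power (this is precisely Proposition~\ref{lem:negativecorrelation}: the $\ell$-dag is dominated edgewise by an inhomogeneous Erd\H{o}s--R\'enyi graph with edge probabilities $\ell/(\max(i,j)-1)$). The $\ell$-th power event is not accessible to the unlabeled observer, who does not know which edges at a vertex go ``up.'' Consequently your union bound $\sum_j \binom{j}{k}(k/j)^{\ell k}$ does not reflect what happens; the correct estimate has exponent equal to the number of edges of the subgraph, not $\ell$ times anything, and $\ell$ enters only as a linear factor per edge. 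The dependence on $\ell$ in the final bound comes from a different place: having $\lfloor\ell/2\rfloor$ independent $2$-dags in $G$ lets one take the cycle-length threshold $m\asymp\log(1/\epsilon)/\ell$ smaller, and this is also what forces the restriction $\epsilon\le e^{-c_2\ell}$.

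Finally, your first step (showing the induced subgraph on $[k]$ is globally ``recognizable as the top of the dag'') is both vaguer and stronger than what is needed. The paper only shows that vertex $1$ anchors a small double cycle, and does so very concretely: with high probability there are two low-index vertices $i,j$ such that $(1,i),(1,j)$ lie in $T_2$ but not in $T_1$, and the $T_1$-paths from $1$ to $i$ and from $1$ to $j$, closed by these two extra edges, form the two cycles; the cycle lengths are controlled by the height of the subtree of $T_1$ on $[k_\epsilon]$. No claim of global recognizability is required, and indeed establishing one would be considerably harder.
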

\end{tcolorbox}

Explicit values of the constants $c_0,c_1,c_2$ are  given in the proof below.
In the uniform Cooper-Frieze model we have a similar bound:

\begin{tcolorbox}
\begin{theorem}
\label{thm:main2}
Let $G=G^{(n)}$ be a uniform Cooper-Frieze random graph on $n$
vertices, with parameter $c$. Root estimation is possible in $G$. In
particular, one may take
\[
  K(\epsilon) \le c_0\log(1/\epsilon)^{c_1\log(1/\epsilon)}
\]
for some constants $c_0,c_1>0$ depending only on $c$.
\end{theorem}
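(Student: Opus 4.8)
The paper wants to show root estimation is possible, with $K(\epsilon)$ of the stated form. The key structural observation is that a uniform Cooper-Frieze graph $G = (V, E_1 \cup E_2)$ is a uniform random recursive tree $G_1$ with extra independent edges $E_2$ superimposed, where edge $(i,j)$ with $i<j$ appears with probability roughly $c/(j-1)$. So the plan is to reduce to the tree case by a two-step argument: (i) use the tree $G_1$ and a centrality statistic to produce a candidate set that contains the root with high probability, but we cannot *see* $G_1$ — we see $G$. So instead, (ii) design a statistic computable from the unlabeled $G$ that still works because the added edges $E_2$ are "sparse near the root" and hence distinguishable, or alternatively use a subgraph-counting statistic as the introduction hints ("based on the appearance of certain subgraphs"). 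I would follow the subgraph approach: argue that the root vertex is distinguished by the pattern of small neighborhoods / subgraph counts around it, which are stable under the perturbation $E_2$.

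Concretely, the steps I would carry out are: First, condition on the tree $G_1$ and recall the root-finding result for uniform random recursive trees from \citet*{BuDeLu17}: there is a constant-size confidence set (of size $\exp(c\log(1/\epsilon)/\log\log(1/\epsilon))$) based on Jordan-type centrality. Second, quantify the perturbation: show that with probability $\ge 1-\epsilon/3$, say, no edge of $E_2$ is incident to any of the, say, first $m = m(\epsilon)$ vertices — this uses $\sum_{j} c/(j-1)$ summed over small $j$, or more carefully a union bound over pairs $(i,j)$ with $\min(i,j)$ small; since $\PROB\{(i,j)\in E_2\} \le c/(\max(i,j)-1)$, the expected number of $E_2$-edges touching vertex $i$ is $O(c\log n / i)$... this does NOT vanish, so a naive "no perturbation near the root" claim fails for fixed $i$. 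This is the crux. Third, therefore I would instead count a *local* subgraph feature that survives: e.g., the number of vertices $v$ such that the ball of radius $1$ (or $2$) around $v$ in $G$ has a prescribed isomorphism type, intersected with a centrality ordering; show the root is extremal for a robust version of this statistic, and the $\epsilon$-fraction of "bad" vertices contaminated by $E_2$-edges can be absorbed into the size of $S$.

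The main obstacle — and the part I expect the real proof to handle with care — is exactly that the extra edges $E_2$ are *not* negligible near the root: a typical early vertex acquires $\Theta(\log n)$ extra neighbors, so its degree and local structure in $G$ differs drastically from its structure in $G_1$. Thus one cannot simply "peel off" $E_2$ and apply the tree result. The resolution must be a statistic that is invariant (or monotone) under adding such edges, or a conditioning argument that, given the *realized* extra-edge pattern, still localizes the root. I would expect the proof to either (a) use the fact that $E_2$-edges, though numerous, go to *uniformly random* endpoints and hence do not create the ``deep subtree'' signature that the root has in $G_1$, so a rumor/Jordan-centrality computed on a cleverly pruned version of $G$ still works; or (b) directly build the confidence set from counts of a fixed small subgraph $H$ rooted at a vertex, chosen so that the root of $G_1$ is $H$-rich while $E_2$ cannot manufacture too many spurious $H$-rich vertices — then bound the number of spurious ones by $O(1/\epsilon)$-type tail estimates, yielding the stated $K(\epsilon) = c_0\log(1/\epsilon)^{c_1\log(1/\epsilon)}$, which is essentially the tree bound of \cite{BuDeLu17} squared/inflated to pay for the contamination. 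Once the right statistic is identified, the remaining estimates (concentration of subgraph counts, tail bounds on $|E_2|$ near small vertices) are routine.
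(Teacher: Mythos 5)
You correctly diagnose the central difficulty -- the extra edges $E_2$ are \emph{not} negligible near the root (an early vertex picks up $\Theta(\log n)$ extra neighbors) -- but your proposed resolutions point in the wrong direction, and none of them reaches a concrete argument. Your option~(a) (prune $G$ and run Jordan/rumor centrality on a cleaned-up tree) is exactly the approach the paper rejects, because one cannot disentangle $E_1$ from $E_2$ in the unlabeled graph. Your option~(b) (count copies of a small rooted subgraph $H$, so that the root is ``$H$-rich'' while $E_2$ cannot manufacture spurious $H$-rich vertices) is closer in spirit to what the paper does, but it still frames $E_2$ as contamination to be absorbed. The paper's actual insight is the opposite: the $E_2$-edges incident to low-index vertices are the \emph{signal}, not the noise. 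The statistic is membership in the set $S_m$ of vertices that are anchors of a ``double cycle'' of size $(s,t)$ with $s,t\le m$. For the root, one exhibits with high probability two vertices $i,j$ of small index such that $(1,i),(1,j)\in E_2\setminus E_1$; each such edge closes the unique $G_1$-path from $1$ to $i$ (resp.\ $j$) into a cycle, and the two cycles meet in a $G_1$-path ending at $1$, making $1$ an anchor. So it is not about the root being ``rich'' in some subgraph; it suffices to show that the root is an anchor of at least \emph{one} small double cycle, and this existence is witnessed jointly by $G_1$ and $G_2$.

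For the other half -- showing few high-index vertices lie in $S_m$ -- the paper proves a stochastic domination lemma (Proposition~\ref{lem:negativecorrelation}, adapted to the Cooper--Frieze case) showing that the probability of any fixed set of edges being present in $G$ is bounded by the product of independent edge probabilities $(c+1)/(\max(i,j)-1)$, and then bounds $\EXP C_{s,t}(v)$ using the combinatorial Lemmas~\ref{lemma:smallexponents} and~\ref{lemma:bigexponents} to get $\EXP C_{s,t}(v) = O((c+1)^{s+t}(s+t)!/v^2)$. Your sketch never produces a replacement for this second-moment/first-moment control on spurious candidates; ``bound the number of spurious ones by $O(1/\epsilon)$-type tail estimates'' is exactly the nontrivial step that the double-cycle counting plus the domination lemma carry out. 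In short: you found the right obstacle, but the gap is that you try to route around $E_2$ rather than exploit it, and you never identify the double-cycle structure or the domination argument that make the first-moment bound work.
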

\end{tcolorbox}

The main results establish that, upon observing the graph after removing
its vertex labels, one may find a set $S$ of vertices of size independent
of $n$ such that $S$ contains the root vertex (i.e., vertex $1$) with
probability at least $1-\epsilon$. The size of the set is bounded by
a function of $\epsilon$ only.

Observe that if $\ell$ is of the order of
$\log(1/\epsilon)$, then the bound for $K(\epsilon)$ is $1/\epsilon$ times a
poly-logarithmic term in $1/\epsilon$. 
On the other hand, when $\ell$ is a fixed constant, as $\epsilon \to 0$,
the obtained bounds are super-polynomial
in $1/\epsilon$, significantly larger than the analogous bounds
obtained for uniform and preferential attachment trees.
In all ranges of $\ell$, these bounds are inferior to the best upper bounds 
available for the case $\ell=1$ (i.e., uniform random recursive trees).
We do not claim optimality of this bound. It is an interesting open
question whether much smaller vertex sets may be found with the
required guarantees.
We conjecture that for any $\ell >1$, root finding is easier in a uniform random $\ell$-dag than in
a uniform random recursive tree. If that is the case, one should be able to take $K(\epsilon)$
as $\exp\left(O\left(\log(1/\epsilon)/\log\log(1/\epsilon)\right)\right)$.
Similar remarks hold for the bound of Theorem \ref{thm:main2}.

In order to prove Theorems \ref{thm:main1} and \ref{thm:main2},
we propose a root estimation procedure and prove that the same
procedure works in both models. The procedure looks for certain
carefully selected subgraphs that we call \emph{double cycles}.
The set $S$ of candidate
vertices are certain special vertices of such double cycles.

The rest of the paper is organized as follows. In Section \ref{sec:dc}
we introduce the proposed root estimation procedure. The proof of
Theorem \ref{thm:main1} is given in Section \ref{sec:proofofthm1}
while Theorem \ref{thm:main2} is proved in Section
\ref{sec:proofofthm2}.

\section{Double cycles}
\label{sec:dc}

In this section we define the root estimation method that we use to
prove the main results. In order to determine the set $S$ of vertices
that are candidates for being the root vertex, we define ``double cycles''.

Let $s,t$ be positive integers.
We say that a vertex $v\in [n]$ is an \emph{anchor of a double cycle}
of size $(s,t)$ if there exists an integer $0<p\le  \min(s,t)/2$ and
$s+t-1-p$ different vertexes $i_1,i_2,\dots i_{s+t-2-p} \in [n]$, such that

\noindent
$\bullet$
vertices $v,i_1,\ldots,i_{s-1}$ form a cycle of length $s$ in $G$ (in
this order);

\noindent
$\bullet$
vertices $v,i_{s+1-p},\ldots,i_{s+t-1-p}$ form a cycle of length $t$
in $G$ (in this order).

Note that the two cycles are disjoint, except for the common path $v\sim \cdots \sim i_{p-1}$ (so $p$ is the number of common vertices in both cycles).
Also note that $i_{p-1}$ is another anchor of the same double cycle.
If $p=1$, we declare $i_0=v$. In that case the two cycles intersect in
the single vertex $v$ and the double cycle has a unique anchor $v$,
see Figure \ref{fig:doublecycle1}.

In other words, if two vertices $v,u\in [n]$ are connected by three
disjoint paths such that the sum of the lengths of the first and
second paths is $s$ and the sum of the lengths of the second and
third paths is $t$, then $v$ and $u$ are anchors of a double cycle of
size $s$ and $t$. Also, $v$ is the anchor of a double cycle
of size $(s,t)$ if vertex $v$ is the unique common vertex of two cycles of lengths $s$ and $t$.

\begin{figure}[H]
\includegraphics[scale=1.0]{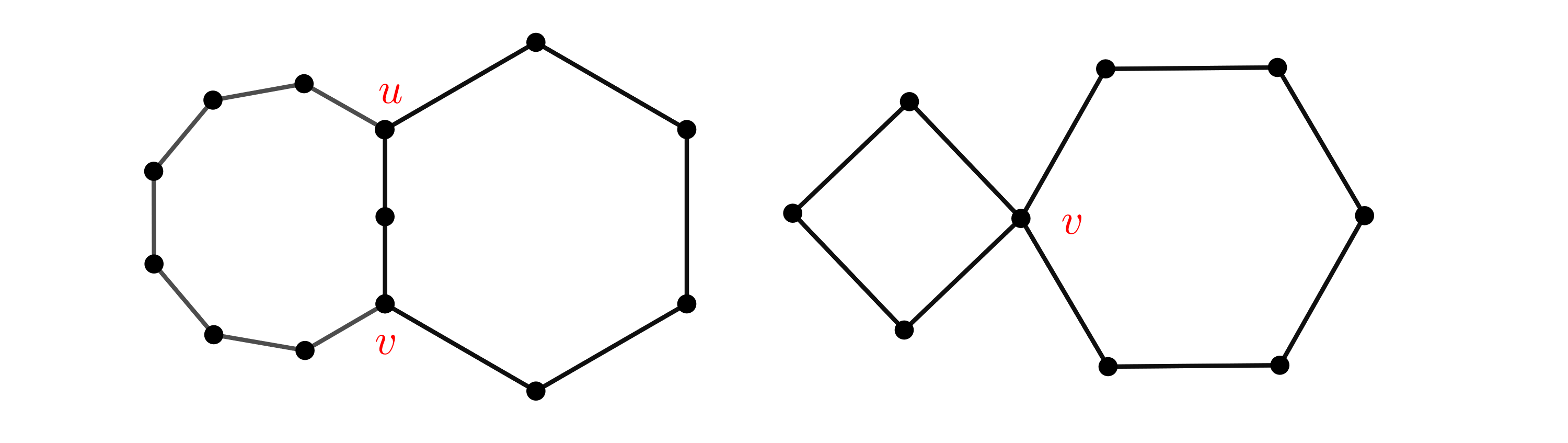}
\caption{Examples of double cycles}
\label{fig:doublecycle1}
\end{figure}

For a positive integer $m$, let $S_m \subset [n]$ be the set of
vertices $i$ such that $i$ is an anchor of a double cycle of size
$(s,t)$ for some $s\le m$ and $t \le m$.

In order to prove Theorem \ref{thm:main1}, it suffices to show that for any given
$\epsilon \in (0,1/100)$, one may take
$m=m_\epsilon =\left\lceil\frac{30}{\ell}\log(1/\epsilon)\right\rceil$ such that
\[
  \PROB\left\{ 1 \in S_m \ \text{and} \ |S_m| \le \frac{4}{\epsilon} \ell^{2m}(2m)! \right\} \ge 1-\epsilon~.
\]
This follows if we prove that we have both
\begin{equation}
\label{eq:rootinset}
 \PROB\left\{ 1 \in S_m \right\} \ge 1-\frac{\epsilon}{2}
\end{equation}
and
\begin{equation}
\label{eq:setissmall}
 \PROB\left\{ |S_m| \le \frac{4}{\epsilon} \ell^{2m}(2m)! \right\} \ge 1-\frac{\epsilon}{2}~.
\end{equation}
We prove \eqref{eq:rootinset} in Section \ref{sec:root} and \eqref{eq:setissmall} in Section \ref{sec:largeindices}. 

\begin{remark}
The reader may wonder why the proposed method looks for double cycles
as opposed to simpler small subgraphs such as triangles or a clique of
size $4$ with an edge removed, etc. The reason is that such simpler
subgraphs are either too abundant in the sense that vertices with high
index may be contained in (many of) them or the root vertex is not
contained in any of them with some probability that is bounded away
from zero. This may happen in
spite of the fact that the expected number of such small subgraphs
containing the root vertex goes to infinity as $n\to \infty$. Double cycles guarantee
the appropriate concentration expressed in \eqref{eq:rootinset}.
\end{remark}

\section{Proof of Theorem \ref{thm:main1}}
\label{sec:proofofthm1}

As it is explained in the previous section, in order to prove Theorem \ref{thm:main1}, it is enough to
prove the inequalities \eqref{eq:rootinset} and \eqref{eq:setissmall}, where $S_m$ is the set 
of those vertices that are anchors of a double cycle of size $(s,t)$ for some $s,t\le m$.

\subsection{The root vertex is the anchor of a small double cycle}
\label{sec:root}

First we consider the case when $\ell=2$. Then the observed graph $G$ is
the union of two independent random recursive trees $T_1$ and
$T_2$. To prove \eqref{eq:rootinset} we need to ensure that vertex $1$
is the anchor of a double cycle of small size, with probability at
least $1-\epsilon/2$. To do so, it suffices to show that
there are two edges $(1,i)$ and $(1,j)$ that are present in $T_2$ but not
in $T_1$ where $i$ and $j$ are ``small''-- whose meaning is specified below. 
Indeed, in this case there are two cycles
containing vertex $1$ formed as follows:
\begin{itemize}
\item the unique path from vertex $1$ to $i$ in $T_1$ loops back to
  $1$ thanks to edge $(1,i)$, present in $T_2$;
\item the unique path from vertex $1$ to $j$ in $T_1$ loops back to
  $1$ thanks to edge $(1,j)$,  present in $T_2$.
\end{itemize}
The only intersection of those two cycles is the intersection of the
paths in $T_1$ from vertex $1$ to $i$ and from vertex $1$ to $j$. In a
tree, the intersection of two paths is either empty or a path
itself. Here the intersection is not empty since both paths contain vertex $1$. Thus,
vertex $1$ is in two cycles which only intersect in a path having
vertex $1$ as an extremity, meaning that vertex $1$ is the anchor of a
double cycle. Next we show that two such edges indeed exist, with high probability.

For a vertex $i\in [n]$, the probability that the edge $(1,i)$ is present in
$T_2$ is $1/(i-1)$. The probability that it is absent in $T_1$ is
$1-1/(i-1)$. By independence of $T_1$ and $T_2$, the probability
that the edge $(1,i)$ is present in $T_2$ and absent in $T_1$ is
$\left(1-1/(i-1)\right)/(i-1)$. Let $X_k$ denote the
 number of edges of form $(1,i)$ for some $i\in [k]$ that
are not edges in $T_1$. Then $X_k$ may be written as a sum 
of independent random variables,
\[
  X_k \ = \ \sum_{i=2}^k B_i
\]
where $B_i$ is a Bernoulli random variable with parameter
$\frac{1}{i-1}\left(1-\frac{1}{i-1}\right)$.

If $X_k\ge 2$, there exist two edges of form $(1,i)$ with $i\le k$ that are not present in $T_1$.
By a standard bound for the lower tail for
for sums of nonnegative independent random
variables, see \cite[Exercise 2.9]{BoLuMa13}, we have
\[
  \PROB\left\{ X_k \geq 2 \right\} \ \geq \ 1 - \exp\left(
    -\frac{\left(\E[X_k]-1 \right)^2}{2\E[X_k]} \right)~.
\]  
Since $\E[X_k]$ is easily seen to fall between $\log(k)-2$ and
$\log(k)-1$, we have
\[
  \PROB\left\{ X_k \geq 2 \right\} \ \geq \ 1 - \exp\left( -\frac{1}{2}\log(k)+\frac{5}{2} -\frac{2}{\log(k)-1} \right)~. 
\]
Hence, for $k_{\epsilon}=\left\lceil 16e^5/\epsilon^2\right\rceil$, we have
$\PROB\{X_{k_{\epsilon}}\geq 2\}\geq 1-\epsilon/4$. This implies
that, with probability at least $1-\epsilon/4$, vertex $1$ is the anchor of a double cycle such that all vertices in the double cycle are in $[k_{\epsilon}]$. To conclude the proof of
\eqref{eq:rootinset} we need to check that indeed the size of the
double cycle containing vertex $1$ is at most $m$. Such double cycles are formed by
a path in $T_1$, closed by an additional edge coming from
$T_2$. Therefore, both cycles contained in the double
cycle of interest have a size bounded by the height of the subtree of
$T_1$ induced by the vertex set $[k_{\epsilon}]$, plus
$1$. By well-known bounds for the height of a uniform random recursive
tree (see, e.g.,  Drmota \cite{Drm09}, Devroye \cite{treedevroye},
Pittel \cite{pittel}) we
have that the
depth of a uniform random recursive tree on $k$ vertices is bounded
by $e\log(k)+e\log(4e/\epsilon)$ with probability at least
$1-\epsilon/4$, see  Drmota \cite[p. 284]{Drm09}.

Plugging in the value of $k_\epsilon$, we get that for any
$\epsilon\leq 10^{-2}$, the diameter of a uniform recursive random
tree of size $k_{\epsilon}$ is at most $15\log(1/\epsilon)$, with
probability at least $1-\epsilon/4$.

Putting these bounds together, we have that, in the
case $\ell=2$, with probability at least $1-\epsilon/2$,
vertex $1$ is an anchor of a double cycle of size $(s,t)$ with
$s,t \le 15\log(1/\epsilon)$,  implying \eqref{eq:rootinset} for $\ell=2$.

It remains to extend the above to the general case of $\ell\geq
2$.
Since $G$ is the union of $\ell$ independent uniform random recursive trees, it contains
the union of $\lfloor \ell/2\rfloor$ independent, identically distributed random uniform $2$-dags.
Using the result proved for random uniform
$2$-dags above, 
the probability than in
$G$, vertex $1$ is not the anchor of a double cycle of size at most
$15\log(\epsilon^{2/(\ell-1)})$ is at most $\epsilon$. This
concludes the proof of \eqref{eq:rootinset} in the general case.

\subsection{High-index vertices are not anchors of double cycles}
\label{sec:largeindices}

In order to prove \eqref{eq:setissmall} we need to show that no vertex
with high index is the anchor of a double cycle of size smaller than
$m$. We bound the probability that there exists $v
>K$ such that  $v \in S_m$, where recall that $K=K(\epsilon)$.
To this end, we count $C_{s,t}(v)$, the
number of double cycles of size $(s,t)$ having vertex $v$ as an
anchor. Then, by the union bound,
\begin{equation}
  \label{unionboundSM}
  \PROB \left\{ \exists v>K: v \in S_m\right\}
  \leq  \sum_{v\geq K} \sum_{s,t \leq m_{\epsilon}}
  \PROB\{C_{s,t}(v)\geq 1\}
   \leq  \sum_{v\geq K} \sum_{s,t \leq m_{\epsilon}} \E C_{s,t}(v)~.
\end{equation}
In order to bound $\E C_{s,t}(v)$,
we may assume, without loss of generality, that $s \leq t$.


For a permutation $\sigma\in \Pi_{s+t-2-p}$,
we denote by $C(s,t,p,v,\sigma, i_1,\ldots, i_{s+t-p-2})$
the following event:

\begin{itemize}
\item if $p=1$,
\begin{eqnarray*}
 \lefteqn{   
    C(s,t,1,v,\sigma, i_1,\ldots, i_{s+t-2})     }   \\
  &  = & \left\{v \sim i_{\sigma(1)} \sim \cdots \sim
  i_{\sigma(s-1)}\sim v \sim i_{\sigma(s)} \sim \cdots \sim i_{\sigma(s+t-2)} \sim v\right\}
  ~,
\end{eqnarray*}  
\item and if $p>1$
\begin{eqnarray*}
 \lefteqn{   
  C(s,t,p,v,\sigma, i_1,\ldots, i_{s+t-p-2})     }   \\
  & = & \left\{v \sim i_{\sigma(1)} \sim \cdots \sim
  i_{\sigma(s-1)}\sim v \sim i_{\sigma(s)} \sim \cdots \sim
  i_{\sigma(s+t-2-p)} \sim i_{\sigma(s-p)}\right\}~.
\end{eqnarray*}  
\end{itemize}
where $i\sim j$ denotes that vertices $i$ and $j$ are joined by an
edge.
Thus, $C(s,t,p,v,\sigma, i_1,\ldots, i_{s+t-p-2})$ is
the event that the double cycle of size $s,t$ ($s\leq t$) having
$p$ vertices in the intersection, with $v$ as an anchor and on the set
of vertices $\{i_1,\ldots,i_{s+t-p-2}\}$ ordered by $\sigma$ as
illustrated in Figure \ref{fig:doublecycle2} is present.

\begin{figure}[H]
\begin{center}
\includegraphics[scale=1]{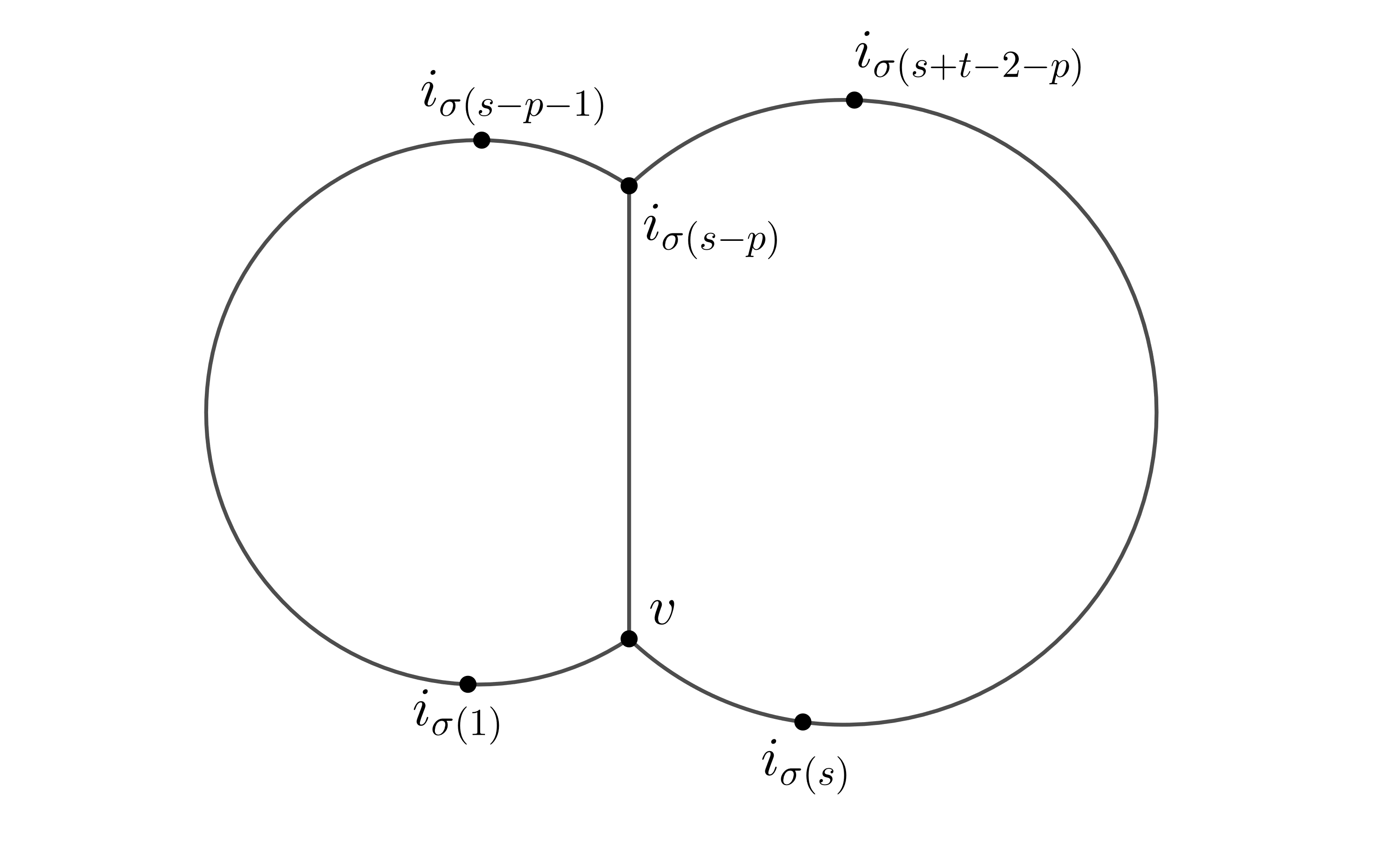}
\end{center}
\caption{Index ordering in a double cycle}
\label{fig:doublecycle2}
\end{figure}

With this notation, we may write $C_{s,t}(v)$ as follows:
\begin{equation}
\begin{aligned}
\label{eq:doublecycles}  
C_{s,t}(v)   
 = &  \sum_{p=1}^{\lfloor s/2\rfloor} \ \ \  \sum_{i_1<\dots< i_{s+t-2-p}} \ \ \   \sum_{\sigma\in
  \Pi_{s+t-2-p}}  \IND_{C(s,t,p,v,\sigma, i_1,\ldots, i_{s+t-p-2})}~,
\end{aligned}
\end{equation}
in order to bound the expected number $\EXP C_{s,t}(v)$ of
double cycles of size $(s,t)$ anchored at $v$, we need to estimate $\PROB\left\{C(s,t,p,v,\sigma, i_1,\ldots, i_{s+t-p-2})\right\}$.

This exact probability is difficult to compute.
Instead, we make use of the following proposition that establishes that a
uniform random $\ell$-dag is dominated by an appropriately defined
inhomogeneous Erd\H{os}-R\'enyi random graph. This random graph is
defined as a graph on the vertex set $[n]$ such that each edge is
present independently of the others and the probability that vertex
$i$ and vertex $j$ are
connected by an edge equals
\[
    \pi(i,j) \defeq \min\left(1, \frac{\ell}{\max(i,j)-1} \right)~.
\]
The next proposition shows that every fixed subgraph is at most as likely to
appear in a uniform random $\ell$-dag as in the inhomogeneous 
Erd\H{os}-R\'enyi random graph.

\begin{proposition}
\label{lem:negativecorrelation}
Let $G=(V,E)$ be a uniform random $\ell$-dag on the vertex set
$V=[n]$. For some $k\le \binom{n}{2}$, let
$(a_1,b_1),\ldots,(a_k,b_k)$
be distinct pairs of vertices such that $a_i\neq b_i$ for all $i\le k$.
Then
\[
  \PROB\left\{ (a_1,b_1),\ldots,(a_k,b_k)\in E \right\}\leq \prod_{i=1}^k \pi(a_i,b_i)~. 
\]
\end{proposition}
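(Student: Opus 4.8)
The plan is to reduce the statement about the uniform random $\ell$-dag to a statement about a single uniform random recursive tree, and then to establish the required correlation inequality tree-by-tree. Recall that $G=(V,E_1\cup\cdots\cup E_\ell)$ where the $E_r$ are the edge sets of $\ell$ independent uniform random recursive trees $G_r=(V,E_r)$. A pair $(a,b)$ (with $a<b$, say) lies in $E$ if and only if it lies in at least one $E_r$, so $(a,b)\notin E$ iff $(a,b)\notin E_r$ for every $r$. Writing $q_r(a,b)\defeq \PROB\{(a,b)\notin E_r\}$, independence across $r$ gives $\PROB\{(a,b)\notin E\}=\prod_{r=1}^\ell q_r(a,b)$. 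In a single uniform random recursive tree, vertex $b$ picks its unique parent uniformly among $\{1,\dots,b-1\}$, independently of all other vertices' choices, so $\PROB\{(a,b)\in E_r\}=1/(b-1)=1/(\max(a,b)-1)$ and hence $q_r(a,b)=1-1/(\max(a,b)-1)$. Thus $\PROB\{(a,b)\in E\}=1-\bigl(1-\tfrac1{\max(a,b)-1}\bigr)^\ell\le \min\bigl(1,\tfrac{\ell}{\max(a,b)-1}\bigr)=\pi(a,b)$ by Bernoulli's inequality (and trivially capping at $1$). So the marginal edge probabilities are already dominated by $\pi$; the real content is the \emph{joint} statement for $k$ distinct pairs.

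For that, the key observation is a negative-association-type property \emph{within a single tree}. Fix one uniform random recursive tree $G_r$. Its edge set is determined by independent choices: for each vertex $v\in\{2,\dots,n\}$, an independent uniform parent $P_v\in\{1,\dots,v-1\}$. For a pair $(a,b)$ with $a<b$, the event $\{(a,b)\in E_r\}$ is exactly $\{P_b=a\}$, which depends only on the single coordinate $P_b$. Now group the $k$ target pairs $(a_i,b_i)$ by their larger endpoint $b_i$. Pairs with distinct larger endpoints involve distinct, independent coordinates $P_{b_i}$, so their events are independent; pairs sharing the same larger endpoint $b$ correspond to disjoint events $\{P_b=a_i\}$ (a vertex has only one parent), so if two target pairs share a larger endpoint the probability that all target pairs are present in $E_r$ is $0\le \prod \pi(\cdot)$ and we are done for that tree. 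Otherwise all the $b_i$ are distinct and, by independence of the $P_{b_i}$,
\[
  \PROB\{(a_1,b_1),\dots,(a_k,b_k)\in E_r\}=\prod_{i=1}^k \frac{1}{b_i-1}=\prod_{i=1}^k\frac{1}{\max(a_i,b_i)-1}~.
\]
The point is that this is an \emph{equality}, not just an inequality, when the larger endpoints are distinct — exactly the mild positive/negative correlation structure of the tree.

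Finally I combine over the $\ell$ trees. By independence of $G_1,\dots,G_\ell$, the event that all $k$ pairs lie in $E$ is implied by (in fact equals the complement of) a complicated union, but the clean way is: $\PROB\{\text{all }(a_i,b_i)\in E\}\le \PROB\{\text{for each }i,\ (a_i,b_i)\in E_{r_i}\text{ for some }r_i\}$, and one expands $\PROB\{(a_i,b_i)\in E\ \forall i\}$ via inclusion over which tree supplies each edge; within a fixed assignment $r:\{1,\dots,k\}\to[\ell]$ of trees to pairs, the pairs assigned to a common tree are distinct, so the per-tree bound above applies, and one multiplies across trees using independence. Summing (with signs handled by a union bound / FKG-type monotonicity argument) and using $1-(1-x)^\ell\le \ell x$ pair by pair yields $\prod_{i=1}^k \pi(a_i,b_i)$. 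The cleanest packaging avoids inclusion–exclusion entirely: since each indicator $\one\{(a,b)\in E_r\}$ is a monotone (increasing) function of the independent parent-choice variables, and edges in distinct trees use disjoint randomness, the family $\{\one\{(a_i,b_i)\in E\}\}_i$ is a collection of increasing functions which, within each tree, are either independent (distinct larger endpoints) or conflicting (same larger endpoint, probability $0$); a direct conditioning argument on the coordinates $(P_{b})_b$ across all trees then gives the product bound.

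\textbf{Main obstacle.} The one genuinely delicate point is handling target pairs that share a larger endpoint: within a single tree such pairs are \emph{mutually exclusive} as edges, which is helpful (probability $0$), but across the $\ell$ trees a shared-endpoint pair can still be realized by using different trees for the two pairs, and one must check the bookkeeping — essentially verifying that distributing $k$ pairs among $\ell$ trees never creates a configuration whose probability exceeds $\prod_i \pi(a_i,b_i)$. I expect this to come down to the elementary inequality $\prod_r\bigl(1-(1-x)^{m_r}\bigr)\le (\ell x)^{?}$ bookkeeping, i.e. carefully pushing the per-tree equalities through the independence across trees and the Bernoulli inequality; it is routine but is where the argument must be written with care.
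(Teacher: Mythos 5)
Your central observation — in a single uniform random recursive tree, the edge event $\{(a,b)\in E_r\}$ with $a<b$ depends only on the independent parent variable $P_b$, so pairs with distinct larger endpoints give independent events while pairs sharing a larger endpoint give mutually exclusive events — is correct and leads to a legitimate, non-inductive alternative to the paper's proof. The paper instead argues by induction on $k$, conditioning on the first $k$ edges and showing the conditional probability of the $(k+1)$st edge is still at most $\pi(a_{k+1},b_{k+1})$, with exactly the same "shared larger endpoint forces the new edge into a different tree" observation as the crux of the inductive step.

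However, your final combination step is not actually carried out, and the way you sketch it contains a genuine error. The claim that $\one\{(a,b)\in E_r\}=\one\{P_b=a\}$ is a \emph{monotone increasing} function of the parent-choice variables is false: there is no ordering on $\{1,\dots,b-1\}$ making $\one\{P_b=a\}$ increasing simultaneously for all $a$, so the FKG-type argument you gesture at does not apply (and indeed within a tree the shared-endpoint events are negatively, not positively, correlated). You also waver between inclusion–exclusion ("with signs") and a union bound; only the latter is needed, and conflating them leaves the argument incomplete. The clean way to finish what you started is the plain union bound over tree assignments: since
\[
\bigcap_{i=1}^{k}\{(a_i,b_i)\in E\}\ \subseteq\ \bigcup_{r:[k]\to[\ell]}\ \bigcap_{i=1}^{k}\{(a_i,b_i)\in E_{r(i)}\}~,
\]
and for each fixed assignment $r$ the event factorizes over trees by independence, with each tree contributing either $0$ (a conflict of larger endpoints within that tree) or the exact product $\prod_{i:r(i)=t}(\max(a_i,b_i)-1)^{-1}$ (distinct larger endpoints $\Rightarrow$ distinct independent coordinates $P_{b_i}$), one gets $\PROB\{\bigcap_i\{(a_i,b_i)\in E_{r(i)}\}\}\le\prod_{i=1}^k(\max(a_i,b_i)-1)^{-1}$ for every $r$, hence summing over the $\ell^k$ assignments gives $\prod_{i=1}^k \ell/(\max(a_i,b_i)-1)$. (One should first discard pairs with $\max(a_i,b_i)-1<\ell$, whose $\pi$ is $1$, so that the product bound matches $\prod_i\pi(a_i,b_i)$; the Bernoulli inequality $1-(1-x)^\ell\le\ell x$ is then unnecessary.) With that fix your route is correct, and arguably cleaner than the paper's induction; as written, though, the last paragraph does not constitute a proof.
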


\begin{proof}
  Recall that the edge set of $G$ may be written as
  $E= \cup_{j=1}^{\ell} E_j$, where
$(V,E_1),\ldots, (V,E_{\ell})$ are independent uniform random recursive
trees.
We may assume, without loss of generality, that $b_i > a_i$ for all $i\in [k]$.

We prove the proposition by induction on $k$. For $k=1$, the inequality
follows from the union bound: 
\begin{equation}
\label{eq:induction1}
\PROB\left\{ (a_1,b_1)\in E \right\}\leq \sum_{j=1}^{\ell} \PROB\left\{(a_1,b_1)\in E_j\right\} = \frac{\ell}{\max(a_1,b_1)-1} ~.
\end{equation} 
For the induction step, suppose the claim of the proposition holds for
up to
$k$ edges and consider $k+1$ distinct pairs
$(a_1,b_1),\ldots,(a_{k+1},b_{k+1})$.
Then, by the induction hypothesis, 
\begin{eqnarray*}
  \lefteqn{
  \PROB\left\{ (a_1,b_1),\ldots,(a_{k+1},b_{k+1})\in E \right\}    } \\
& = & \PROB\left\{ (a_1,b_1),\ldots,(a_k,b_k)\in
      E \right\}  \ \PROB\left\{  (a_{k+1},b_{k+1})\in E \ | \
      (a_1,b_1),\ldots,(a_k,b_k)\in E \right\}\\
  & \leq & \PROB\left\{  (a_{k+1},b_{k+1})\in E \ | \
      (a_1,b_1),\ldots,(a_k,b_k)\in E \right\} \prod_{i=1}^k \pi(a_i,b_i)~.
\end{eqnarray*}
Thus, it suffices to show that for all pairs
$(a_1,b_1),\ldots,(a_{k+1},b_{k+1})$,
\[
  \PROB\left\{  (a_{k+1},b_{k+1})\in E \ | \
    (a_1,b_1),\ldots,(a_k,b_k)\in E \right\} \le \pi(a_{k+1},b_{k+1})~.
\]
First, consider the simpler case when for all $i \in [k]$,
$b_i\neq b_{k+1}$. Then, for every fixed $j\in [\ell]$, 
the events $\{(a_1,b_1)\in E_j,\ldots,(a_k,b_k)\in E_j\}$ and
$\{(a_{k+1},b_{k+1})\in E_j\}$ are
independent.
Moreover since the $\ell$ uniform random recursive trees
are independent, the events $\{(a_1,b_1)\in E,\ldots,(a_k,b_k)\in E\}$ and
$\{(a_{k+1},b_{k+1})\}\in E$ are also independent, and therefore
\[
  \PROB\left\{ (a_{k+1},b_{k+1})\in E
  ~|~ (a_1,b_1),\ldots,(a_k,b_k)\in E
\right\}=\PROB\left\{ (a_{k+1},b_{k+1})\in E\right\} \le \pi(a_{k+1},b_{k+1})~,
\]
by \eqref{eq:induction1}.

Now, suppose that there exist some
$i\in [k]$ such that $b_i=b_{k+1}$. We may assume that there exists
a $w\in [k]$ such that
$b_1,\ldots, b_w=b_{k+1}$ and for all $i\in [w+1,k]$,
$b_i\neq b_{k+1}$.
Since each $(V,E_j)$ is a recursive tree, 
$(a_i,b_{k+1}) \in E_j$ and $(a_{k+1},b_{k+1}) \in E_j$ cannot happen at the
same time.
Thus, edge $(a_{k+1},b_{k+1})$ can only be present in the sets $E_j$
that do not contain any of the edges $(a_i,b_{k+1})$. Hence, introducing
$A=\#\left\{ j\in[\ell]: \ E_j\cap \{
  (a_1,b_{k+1}),\ldots,(a_w,b_{k+1}) \} \neq \emptyset \right\}$, we
have, for all $a\in [\ell]$,
\[
  \PROB\left\{  (a_{k+1},b_{k+1})\in E ~|~ (a_1,b_1),\ldots,(a_k,b_k)\in
    E \ \text{and} \ A=a \right\} \ = \
  \PROB\left\{ (a_{k+1},b_{k+1})\in \cup_{j=1}^{\ell-a}E_j
    \right\}~.
\]  
Using the union bound again,
\[
  \PROB\left\{ (a_{k+1},b_{k+1})\in \cup_{j=1}^{\ell-a}E_j
     \right\} \leq \frac{\ell-a}{b_{k+1}-1}\leq
  \frac{\ell}{b_{k+1}-1}  ~.
\]  
Since this holds for all $a$, we have
\[
  \PROB\left\{  (a_{k+1},b_{k+1})\in E ~|~ (a_1,b_1),\ldots,(a_k,b_k)\in
    E \right\} \leq
  \frac{\ell}{b_{k+1}-1}~,
\]  
as desired.
\end{proof}

To count $C_{s,t}(v)$ we split the sum in \eqref{eq:doublecycles} by adding a parameter $r$ 
in order to
 separate the vertices $i_1,\ldots, i_{s+t-2-p}$  according to whether they are smaller
or larger than $v$, obtaining
\[
  C_{s,t}(v)   =  \sum_{p=1}^{\lfloor s/2\rfloor}   \ \ \ \ \sum_{r=0}^{s+t-p-2}  \ \ \ \ \sum_{\sigma\in \Pi_{s+t-2-p}} 
\sum_{i_1<\cdots< i_r<v} \ \ \ \ 
 \sum_{v<i_{r+1}<\cdots< i_{s+t-2-p}} \IND_{C(s,t,p,v,\sigma, i_1,\ldots, i_{s+t-p-2})}~.
\]
From Proposition \ref{lem:negativecorrelation} we know that the
probability of each given double cycle is upper bounded by the product
of $\pi(i,j)=\ell/(max(i,j)-1)$. 
Thus we introduce
$E_{\sigma}(j)\in\{0,1,2,3,4\}$ counting the number of vertices
neighboring vertex $i_j$ in the double cycle, that have indices
smaller than $i_j$. By convention we write $E_{\sigma}(0)$ for the
analogous quantity for vertex $v$. Doing so, we may write
\begin{equation}\label{expectationcalculus}
\begin{aligned}
\EXP C_{s,t}(v)  & \leq  \sum_{p=1}^{\lfloor s/2\rfloor} \ \  \ell^{s+t-p} \ \  \ \sum_{r=0}^{s+t-p-2}   \ \ \ \ \sum_{\sigma\in \Pi_{s+t-2-p}} \ \ \ (v-1)^{-E_{\sigma}(0)}  \\
& \ \ \ \times \left( \sum_{i_1<\cdots < i_r<v} \ \ \prod_{j=1}^r
  (i_j-1)^{-E_{\sigma}(j)} \right)\times \left( \sum_{v<i_{r+1}<\cdots
    < i_{s+t-2-p}} \ \  \prod_{j=r+1}^{s+t-2-p} (i_j-1)^{-E_{\sigma}(j)} \right) ~.
\end{aligned}
\end{equation}
This allows us to decompose the sum in two parts; the sum involving
the $r$ vertices with index
smaller than $v$ and the $s+t-2-p-r$ vertices with index larger than $v$. If we fix
$p$, $m$ and $\sigma$, we need to upper bound both
\[
  A(\sigma,p,r):=A= \sum_{i_1<\cdots i_r<v} \ \ \prod_{j=1}^r
  (i_j-1)^{-E_{\sigma}(j)}
\]
and
\[
  B(\sigma,p,r):=B=\sum_{v<i_{r+1}<\cdots< i_{s+t-2-p}} \ \
  \prod_{j=r+1}^{s+t-2-p} (i_j-1)^{-E_{\sigma}(j)}~.
\]
This may be done with the help of the next two lemmas.

\begin{lemma}\label{lemma:smallexponents}
  Fix a vertex $v$, vertices
  $i_1<\cdots<i_r<v<i_{r+1}<\cdots<i_{s+t-p-2}$ and an ordering
  $\sigma$ of a double cycle on this set of vertices with $v$ as an
  anchor. Then, for every $k\in[r]$ we have
$$ k-1 \geq \sum_{i=1}^k E_{\sigma}(i)~. $$
\end{lemma}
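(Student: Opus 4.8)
The plan is to reinterpret $\sum_{i=1}^{k}E_\sigma(i)$ as the number of edges of the double cycle that lie inside the low-index vertex set $\{i_1,\dots,i_k\}$, and then to observe that this induced subgraph is a forest.

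\emph{Reduction to an edge count.} Fix $k\in[r]$. Recall that the vertices are labelled so that $i_1<\cdots<i_r<v<i_{r+1}<\cdots<i_{s+t-p-2}$, and that for $j\in[r]$ the number $E_\sigma(j)$ counts the neighbours of $i_j$ in the double cycle whose index is smaller than that of $i_j$. Since $i_j<v$, the anchor $v$ is never such a neighbour; and a vertex $i_{j'}$ is such a neighbour precisely when $j'<j$. Hence $E_\sigma(j)=\#\{j'<j:\ i_{j'}\sim i_j\ \text{in the double cycle}\}$, and summing over $j\le k$,
\[
  \sum_{i=1}^{k}E_\sigma(i)=\#\bigl\{(j',j):\ j'<j\le k,\ i_{j'}\sim i_j\ \text{in the double cycle}\bigr\},
\]
which is exactly the number of edges of the double cycle with both endpoints in $\{i_1,\dots,i_k\}$.

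\emph{The induced subgraph is a forest.} All of $i_1,\dots,i_k$ have index strictly less than $v$, so $v\notin\{i_1,\dots,i_k\}$. It therefore suffices to show that the double cycle $D$ with the vertex $v$ deleted is a forest: the subgraph induced on $\{i_1,\dots,i_k\}$ is then a forest on $k$ vertices and so has at most $k-1$ edges, which by the previous paragraph is the claimed bound. If $p=1$, then $D$ consists of two cycles meeting only at $v$, so $D\setminus\{v\}$ is a disjoint union of two paths. If $p>1$, then $D$ is a theta graph whose branch vertices are $v$ and the second anchor $u$, formed by three internally disjoint $v$--$u$ paths, and $D\setminus\{v\}$ is the union of these three paths with $v$ removed, that is, a tree rooted at $u$. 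In either case $D\setminus\{v\}$ contains no cycle, which completes the argument.

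\emph{Where the care lies.} There is no analytic difficulty here; the only thing to get right is the bookkeeping in the first step — that for $j\le k\le r$ a smaller-index neighbour of $i_j$ can only be one of $i_1,\dots,i_{j-1}$ (never $v$, and never a high-index vertex $i_{r+1},\dots$), and that each qualifying edge is counted exactly once in the telescoped sum — together with the clean description of $D\setminus\{v\}$ as a forest in both the $p=1$ and $p>1$ regimes.
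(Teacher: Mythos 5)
Your proof takes the same approach as the paper's: identify $\sum_{i=1}^{k}E_\sigma(i)$ with the number of edges of the double cycle induced on $\{i_1,\dots,i_k\}$, and then argue that this induced subgraph is a forest because it omits $v$. The paper's version is terser — it simply asserts ``since $G(k)$ does not contain $v$, there are no cycles in $G(k)$'' — whereas you spell out exactly why deleting $v$ from the double cycle leaves a forest, distinguishing the $p=1$ case (two cycles meeting at $v$, leaving two disjoint paths) from the $p>1$ case (a theta graph with branch vertices $v$ and $u$, leaving a tree rooted at $u$). This extra case analysis is a genuine clarification of a step the paper leaves implicit, but the underlying idea and the structure of the argument are identical.
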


\begin{proof}
  For $k\in [r]$, we define $G(k)$ as the subgraph of the double cycle
  in which we only keep the $k$ vertices of smallest index, so that
  $ \sum_{i=1}^k E_{\sigma}(i)$ is the number of edges in $G(k)$.

  Since $G(k)$ does not contain $v$, there are no cycles in $G(k)$,
  and therefore it is a forest. Since $|G(k)|=k$, it follows
  that $G(k)$ has at most $k-1$ edges.
\end{proof}

\begin{lemma}\label{lemma:bigexponents}
  Fix a vertex $v$, vertices
  $i_1<\cdots<i_r<v<i_{r+1}<\cdots<i_{s+t-p-2}$ and an ordering
  $\sigma$ of a double cycle on this set of vertices with $v$ as an
  anchor. Then, $\forall k \in [s+t-2-p-r]$ we have
$$k+1\leq \sum_{i=1}^k E_{\sigma}(s+t-1-p-i)~.$$
\end{lemma}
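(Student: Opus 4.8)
The plan is to follow the same strategy as the proof of Lemma~\ref{lemma:smallexponents}, but applied to the $k$ vertices of \emph{largest} index, and to re-interpret the sum $\sum_{i=1}^{k}E_{\sigma}(s+t-1-p-i)$ as a count of edges. Fix $k\in[s+t-2-p-r]$ and let $H=H(k)$ be the subgraph of the double cycle induced by the $k$ highest-index vertices, namely on $\{i_{s+t-1-p-k},\ldots,i_{s+t-2-p}\}$; the indices appearing in the sum, $\{s+t-1-p-i : 1\le i\le k\}$, are precisely the indices of these vertices. Since $k\le s+t-2-p-r$, every vertex of $H$ has index larger than $v$; consequently $v\notin H$, and every vertex of the double cycle that is not in $H$ has index strictly smaller than every vertex in $H$.

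First I would establish the identity $\sum_{i=1}^{k}E_{\sigma}(s+t-1-p-i)=e(H)+\partial(H)$, where $e(H)$ denotes the number of edges of the double cycle with both endpoints in $H$ and $\partial(H)$ the number with exactly one endpoint in $H$. Indeed, for $i_{j}\in H$, $E_{\sigma}(j)$ counts the neighbours of $i_{j}$ in the double cycle of smaller index; by the observation above any neighbour of $i_j$ lying outside $H$ automatically has smaller index and is therefore counted, while a neighbour inside $H$ is counted exactly when its index is smaller. Summing over the $k$ vertices of $H$, each edge of $\partial(H)$ is counted once (at its endpoint in $H$) and each edge counted in $e(H)$ is counted once (at its higher-index endpoint), which gives the identity.

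Next I would bound the two terms. As observed in the proof of Lemma~\ref{lemma:smallexponents}, the double cycle has no cycle that avoids $v$, so the subgraph $H$, which does not contain $v$, is a forest on $k$ vertices and thus $e(H)\le k-1$. On the other hand, every vertex of the double cycle has degree at least $2$, hence $\sum_{i_j\in H}\deg(i_j)\ge 2k$; since this vertex-degree sum equals $2e(H)+\partial(H)$, we get $2e(H)+\partial(H)\ge 2k$. Subtracting, $e(H)+\partial(H)=\bigl(2e(H)+\partial(H)\bigr)-e(H)\ge 2k-(k-1)=k+1$, which is the asserted inequality. The only delicate point is the index bookkeeping of the first paragraph (matching $\{s+t-1-p-i\}$ with the top-$k$ index set and verifying that $v$ is neither in $H$ nor an over-counted ``small'' neighbour); after that, the conclusion follows mechanically from the minimum-degree-$2$ property together with the forest bound, and the degenerate cases ($p=1$, or $H$ containing the second anchor $i_{p-1}$) change nothing since both of those properties still hold.
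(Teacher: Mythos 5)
Your proof is correct and follows essentially the same approach as the paper: both restrict to the induced subgraph on the $k$ largest-index vertices, observe that it is a forest (hence at most $k-1$ edges) because it avoids $v$, and combine this with the minimum-degree-$2$ property of the double cycle. The only presentational difference is that you establish the exact identity $\sum_{i=1}^k E_{\sigma}(s+t-1-p-i)=e(H)+\partial(H)$ via the degree-sum formula, whereas the paper reaches the equivalent bound $\sum E_{\sigma}\ge 2k-\#E'(k)$ by a vertex-by-vertex telescoping argument.
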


\begin{proof}
  For $k\in [s+t-2-p-r]$, we define $G'(k)$ as the subgraph of the
  double cycle in which we only keep the $k$ vertices of largest
  index. Vertex $i_{s+t-2-p-k}$ has at least two neighbors in the
  double cycle. From the definition of $E_{\sigma}(s+t-p-1-k)$,
  $E_{\sigma}(s+t-p-1-k)$ is then at least $2$ minus the number of
   neighbors of $i_{s+t-2-p-k}$ in the double cycle with larger index. The
  number of such neighbors of $i_{s+t-2-p-k}$ is exactly the
  number of edges in $G'(k)$ minus the number of edges in
  $G'(k-1)$. Denoting $G'(k)=\left(V'(k),E'(k)\right)$, it leads to
  \[
    E_{\sigma}(s+t-p-1-k)\geq 2-\left(\#E'(k)-\#E'(k-1)\right)~,
\]    
implying
$$ \sum_{i=1}^k E_{\sigma}(s+t-1-p-i)  \geq  2k-\#E'(k)~. $$ 
Since $G'(k)$ does not contain $v$, it is a forest. Moreover
$|G'(k)|=k$ so $G'(k)$ has at most $k-1$ edges, which concludes the proof.
\end{proof}
We may decompose $A$ as follows:
$$ A=\sum_{i_r<v}(i_r-1)^{-E_{\sigma}(r)}\cdots \sum_{i_1<i_2}(i_1-1)^{-E_{\sigma}(1)}~. $$
From Lemma \ref{lemma:smallexponents} with $k+1$, we know that $-E_{\sigma}(1)\geq 0$, leading to
$$ \sum_{i_1<i_2}(i_1-1)^{-E_{\sigma}(1)}\leq( i_2-1)^{1-E_{\sigma}(1)}~, $$
which in turn leads to
$$ A\leq\sum_{i_r<v}(i_r-1)^{-E_{\sigma}(r)}\cdots \sum_{i_2<i_3}(i_2-1)^{1-E_{\sigma}(1)-E_{\sigma}(2)}~. $$
Once again, by Lemma \ref{lemma:smallexponents} with $k=2$, we have $1-E_{\sigma}(1)-E_{\sigma}(2)\geq 0$, leading to

$$ \sum_{i_2<i_3}(i_2-1)^{1-E_{\sigma}(1)-E_{\sigma}(2)} \leq (i_3 -1)^{2-E_{\sigma}(1)-E_{\sigma}(2)}~. $$
Iterating this scheme $r$ times, using Lemma \ref{lemma:smallexponents} at each step leads to
\begin{equation}\label{Aupperbound}
A\leq (v-1)^{r-\sum_{i=1}^r E_{\sigma}(i)}~.
\end{equation}
Similarly, we decompose $B$ as 
$$ B= \sum_{i_{r+1}>v} (i_{r+1}-1)^{-E_{\sigma}(r+1)} \cdots \sum_{i_{s+t-p-2}>i_{s+t-p-3}}(i_{s+t-p-2}-1)^{-E_{\sigma}(s+t-p-2)}~. $$
It follows from Lemma \ref{lemma:bigexponents} that
$E_{\sigma}(s+t-p-2)\geq2$, and therefore
$$ \sum_{i_{s+t-p-2}>i_{s+t-p-3}}(i_{s+t-p-2}-1)^{-E_{\sigma}(s+t-p-2)}\leq (i_{s+t-p-3}-1)^{1-E_{\sigma}(s+t-p-2)}~. $$
Following an analogous reasoning to the upper bound of $A$, iterating
this scheme $s+t-2-p-r$ times, using Lemma \ref{lemma:bigexponents} at
each step leads to
\begin{equation}\label{Bupperbound}
B\leq (v-1)^{s+t-2-p-r-\sum_{j=r+1}^{s+t-2-p}E_{\sigma}(j)}~.
\end{equation}
Substituting \eqref{Aupperbound} and \eqref{Bupperbound} into
\eqref{expectationcalculus}, we obtain
$$ \EXP C_{s,t}(v)    \leq \sum_{p=1}^{\lfloor s/2\rfloor} \ \  \sum_{r=0}^{s+t-p-2}   \ \  \sum_{\sigma\in \Pi_{s+t-2-p}} \ell^{s+t-p}  (v-1)^{-E_{\sigma}(0)}\times (v-1)^{s+t-2-p-r-\sum_{j=r+1}^{s+t-2-p}E_{\sigma}(j)} \times (v-1)^{r-\sum_{i=1}^r E_{\sigma}(i)}~. $$
Since 
$$ \sum_{j=0}^{s+t-2-p}E_{\sigma}(j)=s+t-p~, $$
we have
$$ \EXP C_{s,t}(v)   \leq \frac{1}{v^2} \sum_{p=1}^{\lfloor s/2\rfloor} \ \  \sum_{r=0}^{s+t-p-2}   \ \  \sum_{\sigma\in \Pi_{s+t-2-p}} \ell^{s+t-p}  ~, $$
leading to
\begin{align*}
\E\left[ C_{s,t}(v) \right]   \ & \leq \sum_{p=1}^{\lfloor s/2\rfloor} \ell^{s+t-p} (s+t-p-2)!(s+t-p-2)\frac{1}{v^2}  \\
& \leq 2\ell^{s+t}\frac{(s+t)!}{v^2}~.
\end{align*}
Finally, we plug this bound in \eqref{unionboundSM}:
\begin{align}
\PROB \left( \exists v\geq K \ : \ v \in S_m \right) \ \leq & \ \sum_{v\geq K} \sum_{s,t \leq m_{\epsilon}} 2\ell^{s+t}\frac{(s+t)!}{v^2} \\
& \leq 4\ell^{2m_{\epsilon}}\left(2m_{\epsilon}\right)! \frac{1}{K}~.
\end{align}
 Choosing $K=8\frac{1}{\epsilon}\ell^{2m_{\epsilon}}(2m_{\epsilon})!$
 concludes the proof of \eqref{eq:setissmall} and therefore Theorem
 \ref{thm:main1} follows.

\section{Proof of Theorem \ref{thm:main2}}
\label{sec:proofofthm2}

The proof of Theorem \ref{thm:main2} is analogous to that of Theorem
\ref{thm:main1}. In order to avoid repeating essentially the same
argument, we only highlight the differences in the proofs.

It is enough to
prove that, choosing
$m \ = \ m_{\epsilon} \ = \left\lceil
  (9+12/c)\log(1/\epsilon)\right\rceil$ one
has
\[
  \PROB\left\{ 1\in S_m \ \text{and} \ |S_m|\leq \frac{4}{\epsilon}
    \left(c+1\right)^{2m}(2m)! \right\}\geq 1-\epsilon~.
\]  
This follows if we prove that 
\begin{equation}
\label{eq:rootinset1}
 \PROB\left\{ 1 \in S_m \right\} \ge 1-\frac{\epsilon}{2}
\end{equation}
and
\begin{equation}
\label{eq:setissmall1}
 \PROB\left\{ |S_m| \le \frac{4}{\epsilon} \left(c+1\right)^{2m}(2m)! \right\} \ge 1-\frac{\epsilon}{2}
\end{equation}
both hold.

Recall that the uniform Cooper-Frieze model is the union of a uniform
random recursive tree $G_1$ and an inhomogeneous Erd\H{o}s-R\'enyi random
graph $G_2$  (with edges probabilities $\min(c/\max(i,j)-1,1)$).

Proving \eqref{eq:rootinset1} and \eqref{eq:rootinset} shares the same
basic argument. In order to show that the root vertex is an anchor of
a double cycle of size $(s,t)$ for some $s,t\le m$, one may show that,
with the desired probability, there exist at least two vertices $i,j$
with sufficiently small index such that the edges $(1,i)$ and $(1,j)$
are not present in the uniform random recursive tree but they are
present in the inhomogeneous Erd\H{o}s-R\'enyi random
graph $G_2$. This follows by similar concentration arguments
(for sums of independent Bernoulli random variables  and for the
height of a uniform random recursive tree) as 
in the proof of Theorem \ref{thm:main1}.



The proof of \eqref{eq:setissmall1} is
once again analogous to the proof of
\eqref{eq:setissmall}. We remind the reader than the main step of the
proof of Theorem \ref{thm:main1} relies on the fact that a uniform
random $\ell$-dag is dominated by an inhomogeneous Erdős–Rényi
random graph with of edges probabilities
$\ell/(\max(i,j)-1)$, as shown in Proposition \ref{lem:negativecorrelation}.
Using a similar
reasoning as in Proposition \ref{lem:negativecorrelation}, 
one may prove that a uniform Cooper-Frieze random graph is dominated
by an an inhomogeneous Erdős–Rényi random graph with edge probabilities
$(c+1)/(\max(i,j)-1)$. The remainder of the proof is exactly the same
as that of the proof of \eqref{eq:setissmall} and concludes the proof of Theorem
\ref{thm:main2}.

\section{Concluding remarks}

In this paper we addressed the problem of finding the first vertex in
dynamically growing networks, based on observing a present-day
snapshot of the unlabeled network. This problem has mainly been
studied for trees and the main purpose of the paper is to study
root finding in more complex networks. 
The main results show that in certain natural models it is possible to
construct confidence sets for the root vertex whose size does not
depend on the observed network. These confidence sets contain the
root vertex with high probability, and their size only depends on the
required probability of error. We prove this property in two models
of random networks, namely uniform $\ell$-dags and a simplified
model inspired by a general random network model of Cooper and
Frieze. In both models, the constructed confidence set contains all
vertices that are anchors of certain small subgraphs that we call
``double cycles.''

The paper leaves a number of questions open.  We conjecture that the
upper bounds obtained for the size of the confidence set are
suboptimal (as a function of the probability of error $\epsilon$). To
substantially improve on these bounds one may need to consider
``global'' measures, reminiscent to the centrality measures employed
in the case of root finding in recursive trees, as opposed to the
``local'' method proposed here. However, their use and
analysis appears substantially more challenging.

Deriving lower bounds for the size of the confidence
set is another interesting open question.

Another path for further research is to extend the network models
beyond the uniform ones considered in this paper. The most
natural extensions are preferential attachment versions of the
models.

We end by noting that the methodology based on double cycles
also works in a variant of the uniform Cooper-Frieze model in which
the uniform random recursive tree is removed. More precisely, one may
consider an inhomogeneous Erd\H{os}-R\'enyi random graph on the vertex
set $[n]$ with edge probabilities
$\min\left(c/(\max(i,j)-1),1\right)$, where $c>1$ is a
constant. In this case one may prove the following. 

\begin{tcolorbox}
\begin{theorem}
\label{thm:main3}
Let $c>1$ and let $G=G^{(n)}$ be an inhomogeneous Erd\H{os}-R\'enyi
random graph on $n$
vertices, with edge probabilities
$p_{i,j}=\min\left(c/(\max(i,j)-1),1\right)$.
Root estimation is possible in $G$. In
particular, there exist constants $c_0,c_1>0$, depending on $c$ only,
such that
one may take
\[
  K(\epsilon) \le \left( \frac{c_0}{\epsilon^{c_1}}\right)^{\frac{c_0}{\epsilon^{c_1}}}~.
\]
\end{theorem}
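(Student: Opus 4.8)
The plan is to run the same two-step scheme as in the proofs of Theorems~\ref{thm:main1} and~\ref{thm:main2}. Fixing an integer $m=m_\epsilon$ (which will be polynomial in $1/\epsilon$), it is enough to prove both $\PROB\{1\in S_m\}\ge 1-\epsilon/2$ and $\PROB\{|S_m|\le \tfrac{4}{\epsilon}\, c^{2m}(2m)!\}\ge 1-\epsilon/2$. The second bound is essentially free here: $G$ is \emph{itself} an inhomogeneous Erd\H{o}s--R\'enyi graph with edge probabilities $\min(c/(\max(i,j)-1),1)$, so the conclusion of Proposition~\ref{lem:negativecorrelation} holds trivially --- with equality, by independence of the edges --- and with $c$ playing the role of $\ell$. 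Therefore the entire counting argument of Section~\ref{sec:largeindices} --- Lemmas~\ref{lemma:smallexponents} and~\ref{lemma:bigexponents}, the bounds \eqref{Aupperbound}--\eqref{Bupperbound}, and the union bound \eqref{unionboundSM} --- applies verbatim and gives $\E C_{s,t}(v)\le 2c^{s+t}(s+t)!/v^2$, hence $\PROB\{\exists v\ge K: v\in S_m\}\le 4c^{2m}(2m)!/K$, so that one may take $K=\tfrac{8}{\epsilon}\, c^{2m}(2m)!$.

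The real content is to show that the root is the anchor of a double cycle whose two cycles are short. Unlike in Theorems~\ref{thm:main1}--\ref{thm:main2}, there is no spanning uniform random recursive tree supplying canonical paths, so I would invoke supercriticality directly. Fix a threshold $M=M(\epsilon)$ and work inside the prefix graph $H:=G[\{1,\dots,M\}]$. Since $c>1$, every potential edge $(i,j)$ with $j\le M$ is present with probability at least $c/(M-1)$, so $H$ stochastically dominates the Erd\H{o}s--R\'enyi graph $G(M,c/(M-1))$, which is supercritical. Standard facts about supercritical Erd\H{o}s--R\'enyi graphs then give, with probability at least $1-\epsilon/8$ once $M$ is large, a unique giant component in $H$ of diameter $O(\log M)$ whose $2$-core $\mathcal{K}$ has size $\Omega(M)$ and contains $\Omega(M)$ pairwise vertex-disjoint cycles. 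Moreover the degree of vertex~$1$ in $H$ is a sum of independent Bernoulli variables with mean $\sim c\log M$, so by the same lower-tail bound for sums of nonnegative independent variables used in Section~\ref{sec:root}, vertex~$1$ has at least $\tfrac{c}{2}\log M$ neighbours in $[M]$ with probability at least $1-\epsilon/8$.

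It then remains to produce two cycles through vertex~$1$ that meet only in vertex~$1$ (or along a common path ending at it). I would show that, with probability at least $1-\epsilon/4$, at least four of these $\Theta(\log M)$ neighbours lie in the giant component and can be joined to $\mathcal{K}$ by internally vertex-disjoint paths: vertex~$1$'s many neighbours cannot all be cut off from the linear-size set $\mathcal{K}$ by a bounded separator, so Menger's theorem supplies the disjoint connections, each of length $O(\log M)$ because the giant has diameter $O(\log M)$; splicing two such connections together with one (or two) of the short pairwise-disjoint cycles of $\mathcal{K}$ then produces a double cycle anchored at vertex~$1$ whose cycles have length $O(\log M)$. Hence one may take $m_\epsilon=O(\log M)$. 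How large $M$ must be to push the total failure probability below $\epsilon$ is dictated by how sharply the routing step concentrates; a crude version of it only yields a failure probability polynomial in $1/\log M$, forcing $M=\exp(\mathrm{poly}(1/\epsilon))$ and hence $m_\epsilon=\mathrm{poly}(1/\epsilon)$, which, inserted into $K=\tfrac{8}{\epsilon}\, c^{2m}(2m)!\le(2cm)^{2m}$, yields the claimed bound $K(\epsilon)\le(c_0/\epsilon^{c_1})^{c_0/\epsilon^{c_1}}$. The main obstacle is precisely this routing step --- extracting two vertex-disjoint \emph{short} cycles through the root from the $2$-core of a supercritical random graph, with the required probability and with explicit control of the lengths; a sharper argument (for instance a direct exploration from vertex~$1$ showing it lies in the $2$-core of $H$, together with an estimate of the girth seen from vertex~$1$) ought to supply explicit constants and, plausibly, reduce $m_\epsilon$ to $O(\log(1/\epsilon))$ and correspondingly improve the bound on $K(\epsilon)$.
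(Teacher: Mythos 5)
Your first step --- bounding $|S_m|$ --- is correct and coincides with the paper's: since $G$ is itself an inhomogeneous Erd\H{o}s--R\'enyi graph, Proposition~\ref{lem:negativecorrelation} holds with equality and $c$ plays the role of $\ell$, so the counting of Section~\ref{sec:largeindices} transfers unchanged. For the second step, however, you take a genuinely different route than the paper, and it has a real gap. The paper does \emph{not} analyse the prefix graph $H=G[[M]]$ directly. Instead it splits $G$ into two \emph{independent} inhomogeneous Erd\H{o}s--R\'enyi graphs $G_1$ and $G_2$: on the prefix $[k]$, every edge is placed in $E_1$ with the flat probability $c/k$, while $E_2$ carries the residual probability $\frac{p_{i,j}-c/k}{1-c/k}$ (and all of $p_{i,j}$ outside the prefix). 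Then $G_1[[k]]$ is a plain supercritical $G(k,c/k)$ with a linear-size giant component, and the $G_2$-edges from vertex~$1$ into $[k]$ are independent Bernoullis that are also independent of $G_1$. Conditioning on the giant component of $G_1$ and finding three $G_2$-edges $(1,a),(1,b),(1,c)$ into it, a spanning tree of the giant component immediately yields three short internally disjoint paths from $1$ to the tree-LCA of $a,b,c$, i.e.\ a short double cycle anchored at $1$. The independence between ``where the giant component is'' and ``which edges connect vertex~$1$ to it'' is precisely what makes this step short and clean.

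Your version, working inside $H$ itself, forgoes that independence and must couple the location of the giant/2-core with the routes from vertex~$1$ into it. The routing step is the genuine gap: Menger's theorem gives the \emph{number} of internally disjoint paths from $1$ to $\mathcal{K}$ but says nothing about their \emph{lengths}; the claim that each of them has length $O(\log M)$ because the giant has diameter $O(\log M)$ does not follow (you would need a bounded-length Menger-type theorem, or a separate argument). Similarly, the assertion that the 2-core of a supercritical $G(M,c/M)$ contains $\Omega(M)$ pairwise vertex-disjoint cycles is not a standard fact and is stronger than what is typically proved: the 2-core has $\Theta(M)$ \emph{independent} cycles, but since its kernel has girth $\Theta(\log M)$, the number of pairwise vertex-disjoint cycles is more plausibly $O(M/\log M)$. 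You flag the routing step yourself as the ``main obstacle''; the paper's independent decomposition sidesteps exactly this obstacle, and replacing your paragraph by that decomposition is what turns the sketch into a proof.
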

\end{tcolorbox}

The outline of the proof is similar to that of Theorems
\ref{thm:main1} and \ref{thm:main2}. The only difference is in the
proof that the root vertex is an anchor of a sufficiently small double
cycle. To prove this, we may write $G$ as the union of two independent
inhomogeneous Erd\H{os}-R\'enyi random graphs as follows. Let $k$ be
a sufficiently large integer (only depending on $\epsilon$). Then we
may define $G_1=([n],E_1)$ and $G_2=(n,[E_2])$ as independent inhomogeneous
Erd\H{os}-R\'enyi random graphs such that for all $1\le i < j\le n$,
\[
  \PROB\left\{ (i,j) \in E_1 \right\} = \left\{
    \begin{array}{ll}
      \frac{c}{k} & \text{if} \ j \le k \\
      0 & \text{otherwise}
    \end{array} \right.
\]
and
\[
  \PROB\left\{ (i,j) \in E_2 \right\} = \left\{
    \begin{array}{ll}
      \frac{p_{i,j} - \frac{c}{k}}{1-\frac{c}{k}} & \text{if} \ j \le k \\
      p_{i,j} & \text{otherwise}
    \end{array} \right.
\]
Clearly, $G=([n],E_1\cup E_2)$. The subgraph of $G_1$ induced by the
vertex set $[k]$ is a supercritical Erd\H{os}-R\'enyi random graph and
therefore, with high probability, it has a connected ``giant'' component of size
that is linear in $k$. Then one may easily show that, with high probability,
there are three edges in $G_2$ of the form $(1,i)$, where $i$ belongs
to the giant component. This is enough for vertex $1$ to be an anchor of a double cycle.

The rest of the proof is identical to that of Theorem \ref{thm:main1}.

\bibliographystyle{plainnat}

\end{document}